\numberwithin{equation}{section}
\newtheorem{theorem}{Theorem}[section]
\DeclareMathSymbol{\Z}{\mathalpha}{AMSb}{'132}
\DeclareMathSymbol{\Real}{\mathalpha}{AMSb}{"52}
\DeclareMathOperator*{\Span}{span}
\DeclareMathOperator{\Realpart}{Re}
\DeclareMathOperator{\Imaginarypart}{Im}
\newcommand{\mua}{\mu_\text{a}}
\newcommand{\mus}{\mu_\text{s}}
\title[Accurate Quadrature Rule on the Sphere for 3D-RTE]{An Accurate Quadrature Rule on the Sphere for Fast Computation of the Radiative Transport Equation}
\author[H.~Fujiwara]{Hiroshi Fujiwara}
\address{Graduate School of Informatics, Kyoto University,
Yoshida-Honmachi, Sakyo-ku, Kyoto 606-8501}
\email{fujiwara@acs.i.kyoto-u.ac.jp}
\subjclass[2010]{65D32, 65R20}
\keywords{Numerical Analysis; Numerical Quadrature; Radiative Transport Equation.}
\begin{document}

\maketitle

\begin{abstract}
We present an accurate quadrature formula on the sphere
with less localized quadrature points
for efficient numerical computation of the radiative transport equation (RTE)
in the three dimensions.
High accuracy of the present method
dramatically reduces computational resources
and fast computation of 3D RTE is achieved.
\end{abstract}

\section{Introduction}
We consider an approximation of quadrature on the unit surface,
\[
Q(f) = \int_{S^2} f(\xi) d\sigma_\xi
\approx Q_K(f) = \sum_{k=1}^K w_k f(\theta_k,\phi_k),
\]
where $f$ is a continuous function on $S^2$,
$d\sigma_\xi$ is the surface element,
and $\xi_k = (\theta_k,\phi_k)$ is the standard polar coordinate on $S^2$.

One of its application is fast numerical computation of
the radiative transport equation (RTE) in the three dimensions
which is a mathematical model of near infrared light propagation
in human bodies~\cite{arridge,yamada2014}.
In brain science,
detecting NIR light absorption by hemoglobin is expected to be
a new modality for non-invasive monitoring of our brain activities.

Let $\Omega$ be a bounded domain with piecewise smooth boundary.
We consider the following boundary value problem of RTE
\begin{subequations}\label{eq:rte}
\begin{alignat}{2}
-\xi\cdot\nabla_x I - (\mus + \mua) I
  + \mus \int_{S^2} p(\xi,\xi') I(x,\xi') d\sigma_{\xi'} &= q,
&\quad& \text{in $\Omega \times S^2$} \label{eq:rte:eq},
\\
I(x,\xi) &= I_1(x,\xi), &\quad& \text{on $\Gamma_-$},\label{eq:boundary}
\end{alignat}
\end{subequations}
where $I = I(x,\xi)$
is light intensity at a position $x\in\Omega\subset\Real^3$
with a direction $\xi \in S^2$.
The function $I_1$ is given on 
$\Gamma_- = \bigl\{(x,\xi) \:;\: x \in \partial\Omega, n(x)\cdot\xi < 0\bigr\}$,
where $n(x)$ is the outer unit normal vector to $\partial\Omega$.
The coefficients $\mua$ and $\mus$ represent absorption and 
scattering respectively, and $p(\xi,\xi')$ is
called a scattering phase function
which represents a conditional probability of a photon
changing its velocity from $\xi'$ to $\xi$ by a collision with a scatterer.

Discretizing $\nabla_x I$ by finite difference
and scattering integral by a numerical quadrature rule,
a system of linear algebraic equations is obtained~\cite{klose}.
Since \eqref{eq:rte} in the spatial three dimensions is essentially
a five dimensional problem, numerical computation requires
huge resources (time and storage).
And for its unique solvability,
errors in the quadrature rule should
be sufficiently small~\cite{fujiwara:jascome2011}.
In some examples, computation of the scattering integral spends
over 80\% of total computational time~\cite{fujiwara:jascome2013}.
This means that a novel treatment of scattering integral
is effective for fast computation.

A simple approximation for $Q(f)$ is repeating the trapezoidal rule
to both zenith and azimuthal directions as
\begin{equation}\label{eq:tt}
Q(f)
= \int_0^{2\pi} \int_0^\pi f(\theta, \phi)\sin\theta\:d\theta\:d\phi
\approx \dfrac{\pi}{M_\theta}\dfrac{2\pi}{M_\phi}
  \sum_{m=0}^{M_\theta} \sum_{n=0}^{M_\phi-1} f(\theta_m, \phi_n) \sin\theta_m,
\end{equation}
where $\theta_m = m\pi/M_\theta$ and $\phi_n = 2n\pi/M_\phi$.
This is convenient in application
since both the weights and the nodes $(\theta_m, \phi_n)$ are
explicitly known. 
However a large number of nodes should be taken due to its low accuracy.
Moreover, they localize near the poles
and associated weights are small relatively.
This indicates that
unknowns near the poles formally introduced in discretization
have less meaning.

\section{A New Quadrature Rule on the Sphere}
We construct high-accurate quadrature rule on the sphere
with less localized quadrature points
to reduce a number of discretization points
for the velocity direction $\xi \in S^2$ in \eqref{eq:rte:eq}.

To state more precisely,
we introduce some basic concepts from \cite{sobolev1997}.
For a linear subspace $V \subset C(S^2)$,
we say $Q_K$ is \emph{exact} on $V$ if $Q_K(f) = Q(f)$ for
any function $f \in V$.
For a finite rotation group $G \subset SO(3)$, $Q_K$ is said to be
\emph{invariant} under $G$ if the set of quadrature points $\{\xi_k\}$
is a disjoint union of $G$-orbits,
$\{\xi_k\} = \{g\xi_1'\:;\:g \in G\} \cup \dotsb \cup \{g\xi_s'\:;\: g \in G\}$,
and $w_k = w_j$ if $\xi_k$ and $\xi_j$ belong to
the same orbit.
For $f \in V$ and $g \in G$, we set $f_g(x) = f(gx)$ and
$V_G = \{f \in V\:;\: \text{$f_g = f$ for any $g \in G$}\}$.

For high-accuracy of $Q_K$, we require that it is exact on
\[
\Pi^N = \Span \{Y_n^m\:;\: |m| \leq n \leq N\}
\]
for some non-negative integer $N$, where
\[
Y_n^m(\theta,\phi)
= (-1)^m\sqrt{\dfrac{2n+1}{4\pi}\dfrac{(n-m)!}{(n+m)!}}
   P_n^m(\cos\theta)e^{im\phi},
\quad
|m| \leq n,
\]
is a spherical harmonic of degree $n$ and order $m$~\cite{arfken}.
The functions $\{Y_n^m\:;\:|m|\leq n\}$ form
a complete orthogonal system of $L^2(S^2)$ with
\[
Q(Y_n^m) = \sqrt{4\pi} \delta_{n0} \delta_{m0},
\]
thus $Q_K$ satisfies
\begin{equation}\label{eq:tobesolved1}
Q_K(Y_n^m) = \sum_{k=1}^K w_k Y_n^m(\theta_k, \phi_k) = \sqrt{4\pi}\delta_{n0}\delta_{m0}, \quad
\text{for any $|m| \leq n \leq N$}.
\end{equation}
Exactness on $\Pi^N$ is motivated by
the spherical harmonic expansion of an analytic function $f$ on $S^2$,
\[
f(\theta,\phi) = \sum_{n=0}^\infty \sum_{|m| \leq n} a_{nm} Y_n^m(\theta,\phi)
\]
where coefficients $|a_{nm}|$ decrease exponentially
with respect to $n$~\cite{sobolev1992}.
Hence the error $|Q_K(f)-Q(f)|$ decays rapidly with respect to $N$.

For less localized distribution of quadrature points,
we require that $Q_K$ is also rotationally invariant under the icosahedral group.
Hence we call the proposed approximation
{\em RIQS20} (Rotationally Invariant Quadrature rule on the Sphere under the icosahedral group).

If $Q_K$ is invariant under some finite rotation group $G$ and
exact on $\Pi^N_G$, then it is exact on $\Pi^N$~\cite{sobolev1962}.
Therefore \eqref{eq:tobesolved1} for all $|m| \leq n \leq N$ are
redundant.
The next theorem gives an example of reduction
of \eqref{eq:tobesolved1} using symmetries of $Y_n^m$.
It also reduces \eqref{eq:tobesolved1} to real-valued equations.

\begin{theorem}\label{thm:sym}
Suppose that $\{w_k, \theta_k, \phi_k\}$ in $Q_K$ satisfies the following conditions:
(i) for any $(\theta_k, \phi_k)$, there uniquely exists $(\theta_j, \phi_j)$
such that $(\theta_j, \phi_j) = (\theta_k, \phi_k+\pi)$ and $w_j = w_k$, and
(ii) for any $(\theta_k, \phi_k)$, there uniquely exists $(\theta_j, \phi_j)$
such that $(\theta_j, \phi_j) = (\theta_k+\pi, \pi-\phi_k)$ and $w_j = w_k$.
Then, the system \eqref{eq:tobesolved1} is equivalent to
\begin{subequations}\label{eq:tobesolved}
\begin{alignat}{3}
\Realpart\sum_{k=1}^K w_k Y_n^m(\theta_k, \phi_k) &= \sqrt{4\pi}\delta_{n0}\delta_{m0}, &\quad&
\text{if $n$ is even or $0$}, \label{eq:tobesolved:even}\\
\Imaginarypart\sum_{k=1}^K w_k Y_n^m(\theta_k, \phi_k) &= 0, &\quad&
\text{if $n$ is odd},\label{eq:tobesolved:odd}
\end{alignat}
\end{subequations}
for any $(m,n) \in \bigl\{(2\mu,2\nu)\:;\: 0 \leq 2\mu \leq 2\nu \leq N, \mu,\nu \in \Z\bigr\}\cup
\bigl\{(2\mu,2\nu+1)\:;\: 0 < 2\mu \leq 2\nu+1 \leq N, \mu,\nu \in \Z\bigr\}$.
\end{theorem}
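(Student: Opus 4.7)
The plan is to exploit the transformation properties of $Y_n^m$ under the two involutions defined by conditions (i) and (ii) to split the complex system \eqref{eq:tobesolved1} into real and imaginary parts, half of which vanish identically, and to show that all equations not appearing in \eqref{eq:tobesolved} are automatic consequences of the symmetry.

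First, from the definition of $Y_n^m$ together with the Legendre identity $P_n^m(-x) = (-1)^{n+m} P_n^m(x)$, I would derive the two elementary relations $Y_n^m(\theta,\phi+\pi) = (-1)^m Y_n^m(\theta,\phi)$ and $Y_n^m(\theta+\pi,\pi-\phi) = (-1)^n\, \overline{Y_n^m(\theta,\phi)}$. Grouping the nodes into pairs according to condition (i), each pair contributes $w_k Y_n^m(\theta_k,\phi_k)(1+(-1)^m)$ to $Q_K(Y_n^m)$; hence $Q_K(Y_n^m) \equiv 0$ whenever $m$ is odd. Since the right-hand side of \eqref{eq:tobesolved1} vanishes for $m \neq 0$, all equations with odd $m$ are automatic, and I only need to treat even $m$.

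Next, applying condition (ii) to the remaining (even-$m$) equations, each pair contributes $w_k[Y_n^m(\theta_k,\phi_k) + (-1)^n \overline{Y_n^m(\theta_k,\phi_k)}]$, which is real when $n$ is even and purely imaginary when $n$ is odd. Therefore the complex equation in \eqref{eq:tobesolved1} reduces to its real part \eqref{eq:tobesolved:even} when $n$ is even, and to its imaginary part \eqref{eq:tobesolved:odd} when $n$ is odd, while the complementary part is automatically zero. The case $m = 0$ with $n$ odd is itself automatic, because $Y_n^0$ is real and condition (ii) forces the pair sum to vanish; this explains the restriction $0 < 2\mu$ in the second index set of the theorem. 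Using $Y_n^{-m} = (-1)^m \overline{Y_n^m}$, I would then verify that each equation for negative $m$ is the (conjugate of the) corresponding equation for positive $m$, so restricting to $m \geq 0$ loses no information.

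Finally, I would collect the four parity cases of $(m,n)$ and confirm that the equations not automatically satisfied form exactly the union of index sets appearing in the statement, giving the equivalence in both directions (the forward direction that \eqref{eq:tobesolved1} implies \eqref{eq:tobesolved} is immediate). The main obstacle is not any single computation---the sign arithmetic involving $(-1)^m$, $(-1)^n$, and complex conjugation is routine---but rather the careful bookkeeping required to track which equations cancel via condition (i), which have only one nontrivial part via condition (ii), and which are redundant via the $Y_n^{-m}$ conjugation relation, so that the final surviving index set matches the theorem statement exactly.
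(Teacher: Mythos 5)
Your proposal is correct and follows essentially the same route as the paper's proof: kill the odd-$m$ equations via condition (i) and the relation $Y_n^m(\theta,\phi+\pi)=(-1)^mY_n^m(\theta,\phi)$, split real and imaginary parts by the parity of $n$ via condition (ii) and $Y_n^m(\theta+\pi,\pi-\phi)=(-1)^n\overline{Y_n^m(\theta,\phi)}$, and discard negative $m$ by conjugation. The only (immaterial) difference is that you handle negative $m$ with the pointwise identity $Y_n^{-m}=(-1)^m\overline{Y_n^m}$, whereas the paper uses the composite symmetry $Y_n^{-m}(\theta+\pi,\pi-\phi)=(-1)^{n+m}Y_n^m(\theta,\phi)$ together with assumption (ii).
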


\begin{proof}
Note that $Y_n^m$ satisfies following symmetries,
\begin{align}
Y_n^m(\theta,\phi+\pi) &= (-1)^m Y_n^m(\theta,\phi), \label{eq:sym1}\\
Y_n^m(\theta+\pi,\pi-\phi) &= (-1)^n \overline{Y_n^m(\theta,\phi)}, \label{eq:sym2}
\intertext{and}
Y_n^{-m}(\theta+\pi,\pi-\phi) &= (-1)^{n+m} Y_n^m(\theta,\phi).\label{eq:sym3}
\end{align}
From the assumption (i) and \eqref{eq:sym1}, we have
\[
\sum_{k=1}^K w_k Y_n^m(\theta_k, \phi_k) = 0, \quad \text{if $m$ is odd}.
\]
Similarly, from the assumption (ii) and \eqref{eq:sym2}, we have
\begin{alignat*}{3}
\Imaginarypart \sum_{k=1}^K w_k Y_n^m(\theta_k,\phi_k) &= 0, &\quad&
\text{if $n$ is even},\\
\Realpart \sum_{k=1}^K w_k Y_n^m(\theta_k,\phi_k) &= 0, &\quad&
\text{if $n$ is odd},
\end{alignat*}
and
\[
\sum_{k=1}^K w_k Y_n^0(\theta_k, \phi_k) = 0, \quad \text{if $n$ is odd}.
\]
To sum up, we obtain Table~\ref{tbl:sumYnm} under conditions (i) and (ii).
\begin{table}[t]
\caption{\label{tbl:sumYnm}Real and imaginary parts of $Q_K(Y_n^m)$ under conditions (i) and (ii)}
\begin{center}
\begin{tabular}{cc|cc}
\toprule
$n$ & $m$ & Real part & Imaginary part \\
\midrule
even, $0$ & even, $0$ & \eqref{eq:tobesolved:even} & $0$ \\
          & odd  & $0$ & $0$ \\
\midrule
odd       & even, $\neq 0$ & $0$ & \eqref{eq:tobesolved:odd} \\
          & odd, $0$      & $0$ & $0$ \\
\bottomrule
\end{tabular}
\end{center}
\end{table}
Finally,
\[
\sum_{k=1}^K w_k Y_n^m(\theta_k, \phi_k) = 0, \quad m > 0
\]
leads
\[
\sum_{k=1}^K w_k Y_n^{-m}(\theta_k, \phi_k) = 0, \quad m > 0
\]
from the assumption (ii) and \eqref{eq:sym3}.
This concludes the proof.
\end{proof}

The system of equations~\eqref{eq:tobesolved} is solved numerically
with the Newton iteration or the homotopy method in this study.
In the computation we use
a representation of the icosahedral group
which does not change the icosahedron with vertices
\begin{equation}\label{eq:icosahedron}
(\pm1,0,\pm\alpha),\quad
(\pm\alpha,\pm1,0),\quad
(0,\pm\alpha,\pm1),\quad
\quad
\alpha = \dfrac{\sqrt{5}-1}{2}.
\end{equation}
It gives an orbit which satisfies the conditions in Theorem~\ref{thm:sym}~\cite{fujiwara:RIQS}.

\section{Numerical Examples}

Constructed quadrature points of RIQS20 with the degree $75$
are shown in Figure~\ref{fig:nodes:riqs20},
where solid curves show the projection of the icosahedron \eqref{eq:icosahedron}.
The number of quadrature points on $S^2$ is $1932$,
and they are less localized by virtue of rotational invariance
than those of \eqref{eq:tt} shown in Figure~\ref{fig:nodes:tt}.
\begin{figure}[h]
\begin{minipage}{.48\textwidth}
\centering
\subfigure[\label{fig:nodes:riqs20}RIQS20 (proposed method), degree $75$, $\text{\#Nodes}=1932$]
  {\includegraphics[width=.8\textwidth]{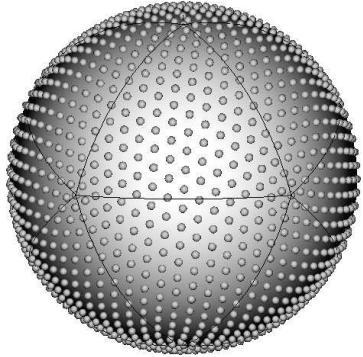}}
\end{minipage}
\hfill
\begin{minipage}{.48\textwidth}
\centering
\subfigure[\label{fig:nodes:tt}Repeating trapezoidal rules \eqref{eq:tt}, $M_\theta=30, M_\phi = 60$, $\text{\#Nodes}=1742$]
  {\includegraphics[width=.8\textwidth]{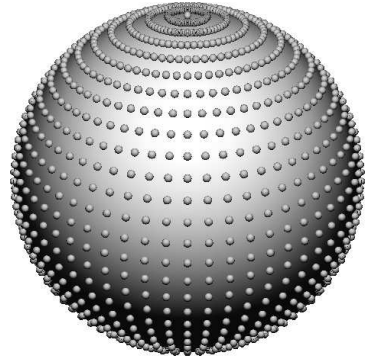}}
\end{minipage}
\caption{\label{fig:nodes}Quadrature points on $S^2$}
\end{figure}

Figure~\ref{fig:error} shows the numerical errors $|Q_K(f) - Q(f)|$ for
\[
f(\xi) = \dfrac{1}{4\pi}\dfrac{1-g^2}{(1-2g\:\xi\cdot\xi'+g^2)^{3/2}}, \quad
\xi'=\left(\dfrac19, \dfrac49, \dfrac89\right), \: g=\dfrac12,
\]
which corresponds to $p(\xi,\xi')$ in 3D RTE as a Henyey-Greenstein kernel
and satisfies $Q(f) = 1$.
In addition to RIQS20 ($+$ signs) and
repeating trapezoidal rules \eqref{eq:tt} (TT; $\times$ signs),
the Gauss-Legendre rule (GLT; $*$ signs)
to the $\theta$-direction in \eqref{eq:tt} is also examined.
The method GLT is expected to be accurate since both the Gauss-Legendre rule and
the trapezoidal rule for a periodic function are accurate
although it has also the defect in localization of quadrature points.
RIQS20 is most accurate among three quadrature rules.
\begin{figure}[t]
\centering
\includegraphics[width=.8\textwidth]{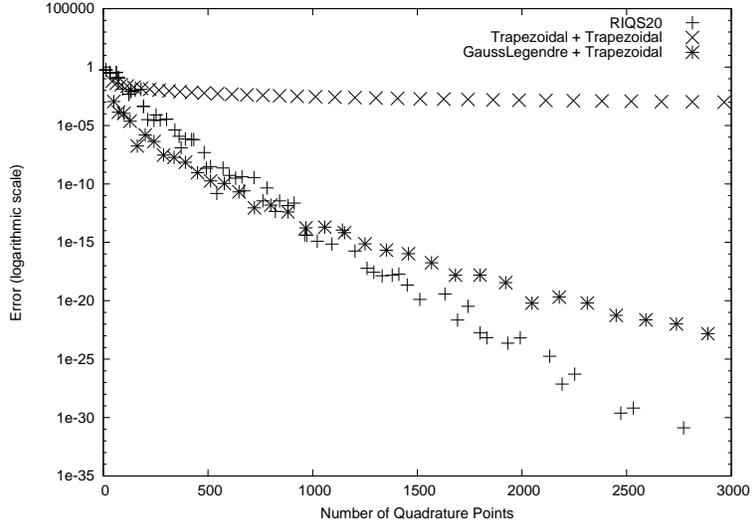}
\caption{\label{fig:error}Numerical errors}
\end{figure}

Table~\ref{tbl:weights} shows the maximum and minimum of weights
in these three quadrature rule.
More precisely,
$1920$ nodes ($99.4\%$) have weights
between $4.4\times10^{-3}$ and $7.0\times10^{-3}$
in RIQS20 with the degree $75$.
This means that almost all function values on the quadrature points
contribue equivalently to numerical quadrature
and thus it is reasonable in discretization of unknown function
in an integral equation.
\begin{table}[h]
\caption{\label{tbl:weights}Maximum and minimal weights, the number of noes associated the weights}
\begin{center}
\begin{tabular}{l|ll}
\toprule
Quadrature & Maximum & Minimum \\
\midrule
RIQS20, degree $75$         & $6.9938\times10^{-3}$ ($60$ nodes) & $2.5423\times10^{-3}$ ($12$ nodes) \\
TT, $M_\theta=M_\phi/2=30$  & $1.0966\times10^{-2}$ ($60$ nodes) & $1.1462\times10^{-3}$ ($120$ nodes) \\
GLT, $M_\theta=M_\phi/2=30$ & $1.0771\times10^{-2}$ ($120$ nodes) & $8.3443\times10^{-4}$ ($120$ nodes) \\
\bottomrule
\end{tabular}
\end{center}
\end{table}

\section{Application to 3D RTE}
Finally we show efficiency of RIQS20 in numerical computation of stationary 3D RTE.
As a numerical example,
we use an MR image of an adult human head
which consists of $181\times217\times181$ voxcels ($1$mm$^3$ cubes, Figure~\ref{fig:rteresult})
and $4.1$ million special points inside the domain.
Optical parameters reported in \cite{boas2003} are adopted.
We employ the Gauss-Seidel iteration to solve the linear equation
due to its diagonal dominance~\cite{fujiwara:jascome2011}.
The iteration is stopped with $3000$ iterations,
by which the relative residual is approximately $7\times 10^{-2}$
in the maximum norm.
\begin{figure}[t]
\centering
\includegraphics[width=.6\textwidth]{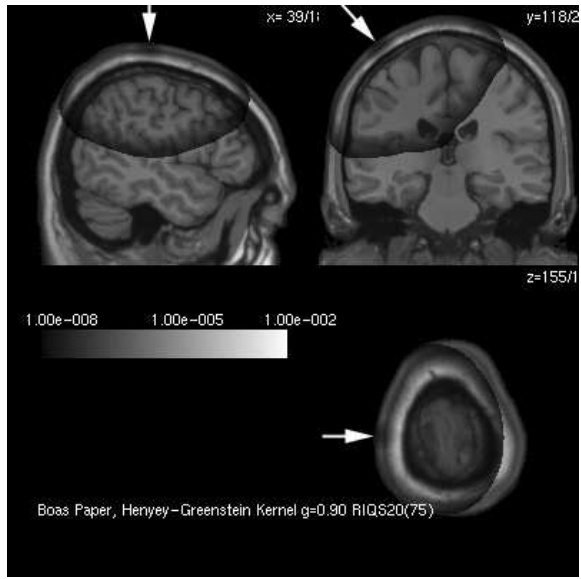}
\caption{\label{fig:rteresult}Numerical results}
\end{figure}

Using the trapezoidal rule \eqref{eq:tt} with $M_\theta=60$ and $M_\phi=120$,
the number of quadrature points
for the velocity direction $\xi \in S^2$ is $7082$
and the number of unknowns in the linear system is $28.7$ billion,
which corresponds to $214$ gigabytes in double precision.
Computational time is approximately
$87.0$ hours on Opteron 6238 (2.5GHz) with 1024 MPI processes.
On the other hand, using RIQS20 with the degree $75$,
the number of nodes on $S^2$ is $1932$
and the number of unknowns is $7.8$ billion,
which corresponds to $58$ gigabytes.
The computational time on the same environment is reduced to $6.3$ hours.
Moreover, we can process the computation on $4$ PCs (Core i7-4770, 3.4GHz)
with GPU (GeForce GTX TITAN)
and the computational time is $17.4$ hours.
The results show that RIQS20 is quite effective in computation of 3D RTE.

\begin{table}[h]
\caption{\label{tbl:comptime}Computational resources for 3D RTE}
\begin{center}
\begin{tabular}{l|c|c|c}
\toprule
Cubature Formula & \#Unknowns & Parallelization & Computational Time \\
\midrule
Trapezoidal Rule \eqref{eq:tt} & $28.7\times10^9$ & 1024 proc & $87.0$ hours \\
$M_\theta=M_\phi/2=60$ & ($214$ GB) & & \\
\midrule
RIQS20 (proposed) & $7.8\times10^9$ & 1024 proc     & $\:\:6.3$ hours \\
degree $75$       & ($58$ GB)       & 4 PC with GPU & $17.4$ hours \\
\bottomrule
\end{tabular}
\end{center}
\end{table}

\section*{Acknowledgments}
The author would like to thank Dr.~Naoya Oishi (Kyoto University)
who kindly provided an MR image and its interpretations.
This work was partially supported by JSPS KAKENHI Grant Numbers
26400198 and 25287028.


\end{document}